%% file: project_PDEnonelliptic_arxiv20260830.tex
\documentclass{amsart}
\usepackage[margin=1.3in]{geometry}
\usepackage{color}
\usepackage{float}
\usepackage{amsmath}
\usepackage{amssymb}
\usepackage{amsthm}
\usepackage{mathrsfs}
\usepackage{enumerate}
 
\allowdisplaybreaks

\input{_format}

\numberwithin{equation}{section}

\begin{document}

\title[{Counterexamples for generalizations of non-elliptic Schr\"{o}dinger}]{Counterexamples for generalizations of the non-elliptic Schr\"{o}dinger maximal operator}

\author[Rena Chu]{Rena Chu}
\address{Georg-August-Universität Göttingen, Bunsenstraße 3-5, 37073 Göttingen, Germany}
\email{rena.chu@mathematik.uni-goettingen.de}

\begin{abstract}
For $P=X_1^2+\cdots + X_n^2$,
let $T_t^Pf(x)$ denote the solution to the linear Schr\"{o}dinger equation at time $t$.
In 1980, Carleson asked for the minimal regularity of an initial data function $f\in H^s(\bR^n)$ that guarantees pointwise convergence of $T_t^Pf(x)$ to $f(x)$ as $t\rightarrow 0$. This was resolved by Bourgain \cite{Bou16}, who constructed counterexamples for the Schr\"{o}dinger maximal operator to show that $s\geq n/(2(n+1))$ is necessary, and Du and Zhang \cite{DuZha19}, who proved that $s> n/(2(n+1))$ is sufficient. Rogers, Vargas, and Vega \cite{RVV06} studied the analogous question for the non-elliptic Schr\"{o}dinger maximal operator, where $P = X_1^2-X_2^2 \pm X_3^2\pm \cdots \pm X_n^2$, and proved that, for all $n\geq 2$, $s\geq 1/2$ is necessary and $s>1/2$ is sufficient.
   In this paper, we construct counterexamples for generalizations of the non-elliptic case
   and prove a necessary condition of $s\geq 1/2$ for an infinite class of polynomial symbols $P$.
\end{abstract}

\maketitle

\section{Introduction}
Given a polynomial $P\in \bR[X_1,...,X_n]$, define
    \begin{align*}
        T_t^{P}f(x) = \frac{1}{(2\pi)^n} \int_{\bR^n} \hat{f}(\xi) e^{i(x\cdot \xi + P(\xi)t)}d\xi
    \end{align*}
initially for $f$ of Schwartz class on $\bR^n$. This gives a solution to the initial value problem
    \begin{align}\label{eqn:PDE}
         \begin{cases}
            i\partial_t u - \Pcal(D) u =0, &(x,t)\in \bR^n\times \bR,\\
            u(x,0)=f(x), &x\in \bR^n
        \end{cases}
    \end{align}
where $D=(\partial/\partial x_1,...,\partial/\partial x_n)/i$ and
    \begin{align*}
        \Pcal(D)f(x) = \frac{1}{(2\pi)^n}\int_{\bR^n} e^{ix\cdot \xi} P(\xi)\hat{f}(\xi)d\xi.
    \end{align*}
When $P = X_1^2+\cdots + X_n^2$, \eqref{eqn:PDE} is the linear Schr\"{o}dinger equation, 
for which Carleson \cite{Car80} posed in 1980 the problem of finding the smallest $s>0$ such that
    \begin{align}\label{eqn:pointwise}
        \lim_{t\rightarrow 0} T_t^Pf(x) = f(x), \quad \text{a.e. } x, \text{ for all }f\in H^s(\bR^n).
    \end{align}
In the one-dimensional case, Carleson \cite{Car80} and Dahlberg and Kenig \cite{DahKen82} showed that \eqref{eqn:pointwise} holds if and only if $s\geq 1/4$. For higher dimensions, this was resolved (up to endpoint), after many decades of work (see e.g. \cite{Car80,DahKen82,Cow83,Car85,Sjo87,Veg88,Bou95,MVV96,TaoVar00,Lee06,Bou13,DemGuo16,DGL17,LucRog17,DGLZ18,LucRog19}), 
by Bourgain \cite{Bou16}, whose counterexample construction proved a necessary condition of $s\geq n/(2(n+1))$, and Du and Zhang \cite{DuZha19}, who proved a sufficient condition of $s> n/(2(n+1))$.

In sharp contrast, the non-elliptic Schr\"{o}dinger equation, which corresponds to the symbol $P = X_1^2-X_2^2 \pm X_3^2\pm \cdots \pm X_n^2$, behaves differently: Rogers, Vargas, and Vega \cite{RVV06} showed that for all $n\geq 2$, \eqref{eqn:pointwise} holds for $s>1/2$ and fails for $s<1/2$.

Analogous questions on the pointwise convergence \eqref{eqn:pointwise} arise for other polynomials $P$.
For an arbitrary polynomial $P$ of degree $k\geq 2$, Sj\"{o}lin \cite{Sjo98} showed that convergence fails for $s <1/4$, and Ben-Artzi and Devinatz \cite{BenDev91} and Rogers, Vargas, and Vega \cite{RVV06} proved convergence holds for all $s>1/2$. (We refer the reader to \cite[\S 1.2]{ACP23} for a detailed literature review.) For a long time, convergence in the range $1/4\leq s \leq 1/2$ remained an open question, for polynomials of higher degrees, until the work of An, Chu, and Pierce \cite{ACP23}, who proved counterexamples for the class of diagonal polynomial symbols $P=X_1^k+\cdots + X_n^k$ with $k\geq 3$ and showed that convergence fails for all $s<\frac{1}{4} + \frac{n-1}{4((k-1)n+1)}$.
Subsequently, Eceizabarrena and Ponce-Vanegas \cite{EPV22a} proved a result of the same strength for polynomials $P$ with leading form (homogeneous part of highest degree) $P_k=X_1^k+Q_k$ where $Q_k\in \bQ[X_2,...,X_n]$ is nonsingular and has degree $k$. More recently, Chu and Pierce \cite{CP23} extended this to polynomials with non-diagonal behaviour by defining a new type of rank $r$ to measure the amount of interaction among the variables in a polynomial and proved that convergence fails for $s<\frac{1}{4} + \frac{n-r}{4((k-1)(n-r+1)+1)}$.
In this paper, we study a class of polynomials characterized by vanishing higher-degree terms when the leading form $P_k$ is written in terms of one variable, and prove necessary conditions on $s$ that are independent of dimension and degree.

Given a real symbol $P$, a standard approach to pointwise almost everywhere convergence problems is to study the maximal operator $ f \mapsto \sup_{0<t<1} |T_t^Pf|$.
    To prove the failure of pointwise convergence \eqref{eqn:pointwise} for some $f\in H^s(\bR^n)$, it suffices to prove that the maximal operator is unbounded from $H^s(\bR^n)$ to $L^1_{\mathrm{loc}}(\bR^n)$. (See e.g. \cite[Appendix A]{Pie20} for details of this implication.) Our main result is the following.

 \begin{thm}\label{thm:main}
    Let $n,k\geq 2$. Let $P\in \bR[X_1,...,X_n]$ be a polynomial of degree $k$ with leading form 
  \begin{align}\label{eqn:mainthm_cond}
        P_k=
        Q_k+X_1Q_{k-1}+X_1^dQ_{k-d}+X_1^{d+1}Q_{k-(d+1)}+\cdots +X_1^kQ_0,
    \end{align}
    where $2\leq d \leq k$ and $Q_{k-i}\in \bR[X_2,...,X_n]$ is a form of degree $k-i$ for all $i=0,1,d,...,k$. Assume $Q_{k-1}$ is not identically zero. Define $s_d=(d-1)/2d$ for $2\leq d \leq k$ and $s_\infty=1/2$.
Suppose there exists a constant $C_s$ such that
    \begin{align}\label{eqn:operator_bounded}
        \|\sup_{0<t<1}|T_t^Pf|\|_{L^1(B_n(0,1))} \leq C_s \|f\|_{H^s(\bR^n)}, \quad \text{for all } f\in H^s(\bR^n).
    \end{align}
Then $s\geq s_\infty$ if $Q_{k-i}$ is identically zero for all $d\leq i \leq k$, and otherwise $s\geq s_d$.
\end{thm}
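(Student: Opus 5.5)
The plan is to build, for a large parameter $R$, an explicit function $f_R$ whose Fourier transform is supported in a thin slab, on which the phase $x\cdot\xi+tP(\xi)$ can be made essentially constant in $\xi$ for a suitable $t=t(x)\in(0,1)$ at every $x$ in a fixed set of positive measure inside $B_n(0,1)$. This is a generalization of the Rogers--Vargas--Vega counterexample for $P=\xi_1\xi_2$. Write $\xi=(\xi_1,\xi')$. Since $Q_{k-1}\not\equiv 0$, fix a unit vector $\omega\in\bR^{n-1}$ with $Q_{k-1}(\omega)\neq 0$; after replacing $\omega$ by $-\omega$ if $k$ is even, or changing the sign of $x_1$ as needed, we may assume $Q_{k-1}(\omega)>0$. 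Set
\begin{align*}
L=R^{(d-1)/d}\ \text{ in the general case},\qquad L=R\ \text{ if } Q_{k-i}\equiv 0 \text{ for all } d\le i\le k,
\end{align*}
and define $\hat f_R$ as a smooth bump adapted to the box
\begin{align*}
\{\xi:\ 0\le \xi_1\le L,\ |\xi'-R\omega|\le c\},
\end{align*}
with $c>0$ a small absolute constant. Then $|\xi|\sim R$ on the support, so $\|f_R\|_{H^s}\sim R^s|\mathrm{supp}\,\hat f_R|^{1/2}\sim R^sL^{1/2}$.

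Next I Taylor expand the phase around $\xi'=R\omega$, writing $\xi'=R\omega+\eta$ with $|\eta|\le c$. The key terms are:
\begin{align*}
tQ_k(\xi') &= tQ_k(R\omega)+t\nabla Q_k(R\omega)\cdot\eta+O(tR^{k-2}),\\
t\xi_1Q_{k-1}(\xi') &= t\xi_1R^{k-1}Q_{k-1}(\omega)+O(t\xi_1R^{k-2}),
\end{align*}
together with the higher terms $t\xi_1^{j}Q_{k-j}(\xi')=O(tL^{j}R^{k-j})$ for $j\ge d$, and all monomials of $P-P_k$, which are $O(t(L+R)^{k-1})$. I choose $t=t(x)$ to kill the large linear-in-$\xi_1$ coefficient, namely
\begin{align*}
t=-x_1/(R^{k-1}Q_{k-1}(\omega)),
\end{align*}
restricting to $-1\lesssim x_1<0$, so that $t\in(0,1)$ and $t\sim R^{-(k-1)}$. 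With this choice the error terms are controlled as follows.
\begin{itemize}
\item $t\xi_1R^{k-2}\lesssim L/R\le 1$.
\item $tR^{k-2}\lesssim R^{-1}$.
\item The lower-order terms contribute $O(R^{-1})$ up to harmless constants.
\item The critical constraint is $tL^dR^{k-d}\lesssim L^dR^{1-d}\lesssim 1$, which is exactly what forces $L=R^{(d-1)/d}$. When all $Q_{k-i}$ with $i\ge d$ vanish this constraint is absent, and only $L\le R$ is needed.
\end{itemize}
The remaining $\eta$-linear term combines with $x'\cdot\eta$ into $(x'+t\nabla Q_k(R\omega))\cdot\eta$. Since $t\nabla Q_k(R\omega)=-x_1\nabla Q_k(\omega)/Q_{k-1}(\omega)=O(1)$ is independent of $R$, this term is $\lesssim c$ as soon as $x'$ lies in the unit-scale ball centred at $x_1\nabla Q_k(\omega)/Q_{k-1}(\omega)$. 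Constant factors $e^{i(\cdot)}$ independent of $\xi$ are irrelevant. So for $x$ in a set $E$ of measure $\gtrsim 1$, after shrinking $c$ and the range of $x_1$ (or translating $f_R$ so that $E\subset B_n(0,1)$), the phase varies by at most a small constant over the support. This yields $\sup_t|T^P_tf_R(x)|\gtrsim|\mathrm{supp}\,\hat f_R|\sim L$ on $E$.

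Plugging into \eqref{eqn:operator_bounded} gives $L\lesssim C_sR^sL^{1/2}$, that is, $R^{1/2\cdot(d-1)/d}\lesssim R^s$ in the general case and $R^{1/2}\lesssim R^s$ in the degenerate case. Letting $R\to\infty$ gives $s\ge s_d$, respectively $s\ge s_\infty$.

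I expect the main obstacle to be the bookkeeping that makes every non-principal contribution simultaneously $O(1)$ for a single choice of $t(x)$. In particular one must check that the curvature of $Q_k$ in $\eta$, the $\eta$-dependence of $Q_{k-1}$ multiplied by $\xi_1$, and all mixed monomials of $P_k$ and of $P-P_k$ are dominated by the bounds $t\sim R^{-(k-1)}$, $\xi_1\le L\le R$ and $|\eta|\le c$. The structural hypothesis \eqref{eqn:mainthm_cond}, namely the absence of the terms $X_1^iQ_{k-i}$ for $2\le i\le d-1$, is precisely what makes the $\xi_1^i$-terms harmless up to $L=R^{(d-1)/d}$. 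Ensuring that the good set $E$ of $x$ has measure bounded below uniformly in $R$, and lies in $B_n(0,1)$, is the other point that needs care.
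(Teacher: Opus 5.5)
Your proposal is correct and is essentially the paper's argument: your $L$ is the paper's $S_1=R^\sigma$, your $R\omega$ plays the role of $R'=(N_2R,\dots,N_nR)$, and your $t(x)=-x_1/(R^{k-1}Q_{k-1}(\omega))$ is the paper's $t=-x_1/\partial_1P_k(0,R')$; the only difference is cosmetic, since you bound the phase oscillation directly on the support of a nonnegative $\hat f_R$, whereas the paper removes $e(ht)$ by integration by parts and then applies Fourier inversion. One small correction: when $L=R$, the $\xi$-dependent part of $tP_{k-1}(\xi)$ and the cross term $t\xi_1\bigl(Q_{k-1}(R\omega+\eta)-Q_{k-1}(R\omega)\bigr)$ are of size $O(|x_1|)$ rather than $O(R^{-1})$, so their smallness comes from confining $x_1$ to a short interval near $0$, which you do invoke and which is exactly the role of the paper's constraint $t\le c_3/R^{k-1}$.
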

This produces necessary conditions on $s$, beyond $1/4$, that are independent of $n,k$ for all polynomials with nonzero $Q_{k-1}$, as soon as $d\geq 3$. (When $d=2$, this recovers the known condition of $1/4$.) The threshold of $s_d$ improves as $d$ increases; in \S \ref{sec:method}, we shall see that the (nonlinear) lower-degree terms in $X_1$ contribute to more error than the higher-degree terms when approximating $T_t^Pf(x)$, hence imposing more stringent conditions. Moreover, the threshold on $s$ optimizes at $s_\infty$ when all nonlinear terms in $X_1$ vanish.
In particular, for polynomials whose leading forms are of the shape $P_k = Q_k + X_1Q_{k-1}$, where $Q_{k-1}$ is not identically zero, pointwise convergence \eqref{eqn:pointwise} fails for $s<1/2$.  Combined with the sufficient condition of $s>1/2$ given by \cite{BenDev91,RVV06}, this resolves the question of minimal regularity, up to endpoint, for an infinite class of polynomials.

We prove the following, more general version of Theorem \ref{thm:main}.
\begin{thm}\label{thm:main_general}
    Let $n,k\geq 2$. Let $P\in \bR[X_1,...,X_n]$ be a polynomial of degree $k$ with leading form 
  \begin{align*}
        P_k=
        Q_k+X_1Q_{k-1}+X_1^2Q_{k-2}+\cdots +X_1^kQ_0,
    \end{align*}
    where $Q_{k-i}\in \bR[X_2,...,X_n]$ is a form of degree $k-i$ for all $0\leq i\leq k$. Suppose there is a $2\leq d\leq k$ and a proper subset $\Sigma=\{X_{i_1},...,X_{i_m}\}\subsetneq \{X_2,...,X_n\}$ such that $Q_{k-i}\in \bR[X_{i_1},...,X_{i_m}]$ for all $2\leq i \leq d-1$ and such that $Q_{k-1}$ consists of a monomial only in the variables of the complement $\{X_2,...,X_n\}\setminus \Sigma$. Suppose there exists a constant $C_s$ such that \eqref{eqn:operator_bounded} holds. Then $s\geq s_\infty$ if $Q_{k-i}$ is identically zero for all $d\leq i \leq k$, and otherwise $s\geq s_{d}$.
\end{thm}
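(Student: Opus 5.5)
We argue by contradiction, building an explicit family of initial data modelled on the Rogers--Vargas--Vega counterexample for $X_1^2-X_2^2$. The idea is to exploit the term $X_1Q_{k-1}$, which is linear in $X_1$: after a change of variables it acts as a transport in the $x_1$ direction, with speed proportional to $Q_{k-1}(\xi')$.

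Since $Q_{k-1}$ is a monomial $c\,\xi^\beta$ in the variables outside $\Sigma$, we localize as follows:
\begin{itemize}
\item The frequencies $\xi_j$ for $X_j\notin\Sigma$ go to a box of side $\sim 1$ around a point where $\xi^\beta\sim R^{k-1}$. Concretely, $\xi_j\in[R, R+1]$ for the variables appearing in $\beta$, scaled so that $|\xi'|\sim R$.
\item The variables in $\Sigma$ are localized to $|\xi_j|\lesssim 1$.
\item $\xi_1$ runs over a long interval $[R^{a},2R^{a}]$, with $a$ to be chosen; this is where the transport happens.
\end{itemize}
Take $\hat f$ to be a smooth bump adapted to this set. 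Then $\|f\|_{H^s}\sim R^{as}|\text{supp}\,\hat f|^{1/2}$, provided $a\ge1$ so that $\xi_1$ dominates $|\xi|$.

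Now Taylor expand the phase $x\cdot\xi+tP(\xi)$ in $\xi_1$ around the base point and bound each contribution on the support of $\hat f$:
\begin{itemize}
\item The term $\xi_1 Q_{k-1}(\xi')t$ gives a linear phase $\xi_1(x_1+tQ_{k-1}(\xi'))$ with $Q_{k-1}(\xi')=\lambda(1+O(R^{-1}))$ and $\lambda\sim R^{k-1}$. For each $x_1\in(-1,0)$ we choose $t=-x_1/\lambda\in(0,1)$, which kills the $\xi_1$-oscillation up to an error $R^{a}\cdot R^{k-2}t\lesssim R^{a-1}$. This forces $a\le 1$, i.e.\ $a=1$.
\item Terms $X_1^iQ_{k-i}$ with $2\le i\le d-1$ involve only $\Sigma$-variables, which are $O(1)$. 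They contribute $t R^{ai}\lesssim R^{ai-(k-1)}$, which is harmless.
\item Terms $X_1^iQ_{k-i}$ with $i\ge d$ contribute $tR^{ai}R^{k-i}$. This is the real constraint.
\item The remaining $\xi'$-dependent part of the phase is handled by the usual translation in $x'$, and lower-degree terms of $P$ are smaller.
\end{itemize}

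The main obstacle is balancing these errors, and I expect it forces a more refined geometry than the one above. I would shrink the $\xi_1$-interval to length $R^{b}$ at height $R^{a}$, and exploit the Galilean-type freedom of shifting the $\xi_1$-centre, so that the quadratic-and-higher Taylor terms in $\xi_1-\xi_1^0$ of the pure $X_1^i$ parts are $\lesssim1$. The $i=d$ term then forces roughly $t\,R^{(d-1)a}R^{2b}\cdots\lesssim1$. Optimizing should give the gain $R^{-1/2+1/(2d)}$ relative to $L^2$, i.e.\ the threshold $s_d=(d-1)/2d$. When all $Q_{k-i}$ with $i\ge d$ vanish, no constraint remains and we take $b=a$ large, giving $s_\infty=1/2$.

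Finally we verify the contradiction. On a set of $x$ of measure $\gtrsim1$ in $B_n(0,1)$ (coherent in $x'$ after the translation), we have $\sup_t|T^P_tf(x)|\gtrsim|\text{supp}\,\hat f|$. Hence the $L^1$ norm is at least $|\text{supp}\,\hat f|$, while the right side is $R^{as}|\text{supp}\,\hat f|^{1/2}$. Letting $R\to\infty$ and comparing exponents gives $s\ge s_d$, respectively $s\ge s_\infty$.
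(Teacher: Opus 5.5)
Your construction is in substance the paper's: a packet spread out in $\xi_1$, with $\xi_\Sigma$ localized at $0$, the remaining coordinates of $\xi'$ at $N_jR$, and $t\approx -x_1/Q_{k-1}(\xi'_0)$.

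For the $s_\infty$ case your choice works. Here that means $a=b=1$, not ``large'': the bilinear term $\eta_1\nabla Q_{k-1}(\xi'_0)\cdot\eta'\,t$ has size $R^{b-1}$, and raising $a$ above $1$ only increases $\|f\|_{H^s}$. The terms $\xi_1^iQ_{k-i}$, $2\le i\le d-1$, are harmless for the same reason as in the paper, namely $\xi_\Sigma\approx 0$. You do need $x_1\in[-c_1,-c_1/2]$ with $c_1$ small rather than $x_1\in(-1,0)$. At $a=1$ several errors are of size $R^{k-1}t$, and these are small only when $t\le c_3R^{1-k}$.

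The gap is the case where some $Q_{k-i}$ with $i\ge d$ is nonzero, which is the substantive half of the statement. There you prove nothing; you only assert that ``optimizing should give'' $s_d$. The constraint you record, $tR^{(d-1)a}R^{2b}\cdots\lesssim 1$, is also wrong. For a window of length $R^b$ centred at $\xi_1^0\sim R^a$, the second $\xi_1$-derivative of $\xi_1^dQ_{k-d}$ is $\sim R^{(d-2)a+k-d}$. So the correct condition is $(d-2)a+2b\le d-1$, which together with $b\le a$ gives $b\le (d-1)/d$, attained at $a=b$. Nor is there a Galilean-type invariance to exploit for general $P$. Moving $\xi_1^0$ away from $0$ makes $\partial_1^jP_k(\xi^0)$, $2\le j<d$, pick up contributions of size $(\xi_1^0)^{d-j}R^{k-d}$ from $X_1^dQ_{k-d}$. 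Shifting the window farther out than its own length therefore only hurts.

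The missing step is already in your third bullet. What blocked you is the artificial proviso $a\ge 1$ in your $H^s$ estimate, which made you fix $a=1$. Drop it: for $a<1$ one still has $|\xi|\sim R$ on $\mathrm{supp}\,\hat f$ because of the non-$\Sigma$ coordinates. Hence $\|f\|_{H^s}\sim R^{s}|\mathrm{supp}\,\hat f|^{1/2}$, and your final comparison yields $R^{a/2-s}$.

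Now take $a=(d-1)/d$ and check each error:
\begin{itemize}
\item the terms $\xi_1^iQ_{k-i}$ with $i\ge d$ have total size $\lesssim tR^{ai}R^{k-i}\lesssim R^{1-i/d}\le 1$, times the small constant from $t\le c_3R^{1-k}$;
\item those with $2\le i\le d-1$ are $O(R^{ai-k+1})=o(1)$;
\item the bilinear and lower-order terms are $O(R^{a-1})$.
\end{itemize}
So every error is controlled, and $s\ge (d-1)/2d$ follows.

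This is exactly what the paper does, with the $\xi_1$-window $[-S_1,S_1]$, $S_1=R^{(d-1)/d}$, centred at the origin. Then $\partial_1^jP_k(0,R')=j!\,Q_{k-j}(R')$ vanishes for $2\le j\le d-1$. The surviving pure-$\xi_1$ errors are $Q_{k-j}(R')S_1^j\,t\lesssim S_1^jR^{1-j}\le 1$ for $j\ge d$.
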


Note that Theorem \ref{thm:main} follows with $\Sigma$ as the empty set. When $\Sigma$ is nonempty, this provides nontrivial results for leading forms that consist of quadratic terms in $X_1$.
 To see a concrete example of a polynomial for which Theorem \ref{thm:main_general} produces strictly stronger results than Theorem \ref{thm:main},
 consider a ternary cubic with leading form $X_2 X_3^2 + X_1 X_2^2 +X_1^2 X_3$. This satisfies Theorem \ref{thm:main_general} with $d=3$ and $\Sigma=\{X_3\}$, and since $Q_0$ vanishes, Theorem \ref{thm:main_general} gives a necessary condition of $s\geq 1/2$ (while Theorem \ref{thm:main} gives $s\geq 1/4$). As a second, more general example, 
    \begin{align}\label{eqn:ex_general}
        P_k = Q_k(X_2,...,X_n) + X_1(X_2^{k-1} + \tilde{Q}_{k-1}(X_2,...,X_n)) + \sum_{i=2}^{k-1} X_1^i Q_{k-i}(X_3,...,X_n) + X_1^kQ_0,
    \end{align}
where $\tilde{Q}_{k-1}$ is a form of degree $k-1$,
 satisfies Theorem \ref{thm:main_general} with $d=k$ and $\Sigma = \{X_3,...,X_n\}$. If $Q_0$ is identically zero, we deduce $s\geq 1/2$, and otherwise $s\geq (k-1)/2k$.

We describe two more strengths of our main theorem. First, we produce counterexamples for $s$ beyond $1/4$ for forms in $n$ variables whose intertwining rank is $n$ (in the terminology of \cite{CP23}), which had been intractable in previous works. By definition, the intertwining rank of a form $P_k$ is $r=\min_{1\leq i \leq n} r(X_i)$, where $r(X_i)$ is the number of coordinates $X_j$ with which $X_i$ intertwines, where for $j\neq i$, $X_i$ is said to intertwine with $X_j$ if $(\partial^2 / \partial X_i \partial X_j)P_k\not\equiv 0$ and by convention, $X_i$ always interwines with itself. For example, the polynomial $X_2 X_3^2 + X_1 X_2^2 +X_3X_1^2$ from above has intertwining rank $r=n=3$. Furthermore, Chu and Pierce \cite[\S 1.4]{CP23} showed that for a fixed odd $k\geq 3$, the form
    \begin{align}\label{eqn:shape_CP}
        P_k=X_1^k+\cdots + X_n^k +\sum_{2\leq j \leq r} X_1 X_j^{k-1} + \sum_{2\leq i < j \leq n} X_iX_j^{k-1}
    \end{align}
is Dwork-regular (meaning, roughly, nonsingular) over $\mathbb{Q}$ and has intertwining rank $r$. For such polynomials, they proved necessary conditions beyond $1/4$ when $1\leq r<n$ and recovered the known threshold of $1/4$ for $r=n$. In either case, as long as $r\geq 2$, this leading form satisfies the hypotheses of Theorem \ref{thm:main} with $d=k$ and $Q_0$ non-vanishing (in fact, it is of the shape \eqref{eqn:ex_general}), from which we deduce that convergence \eqref{eqn:pointwise} fails for $s<s_k=(k-1)/2k$. This improves the threshold provided by \cite{CP23} for \eqref{eqn:shape_CP} for the case $2\leq r <n$ as soon as $k\geq 4$ and for the case $r=n$ as soon as $k\geq 3$. More generally, for every polynomial to which both \cite[Theorem 1.1]{CP23} and Theorem \ref{thm:main_general} apply, we achieve a better threshold, for any $n\geq 2,k\geq 2$ and $2\leq d \leq k$ in the case where $Q_{k-i}$ vanishes for all $d\leq i \leq k$, and otherwise, for any $n$, as soon as $d\geq 3$ and $k\geq 4$. 
Another interesting feature of our theorem is its application to polynomials whose leading forms have real coefficients rather than rational coefficients, the latter of which was a common hypothesis in the recent works of \cite{ACP23,EPV22a,CP23} due to the need for their arithmetic properties (which, as we shall see in \S \ref{sec:method}, we do not require).

Since the truth of a bound of the form \eqref{eqn:operator_bounded}
is invariant for a polynomial $P$ under a nonsingular, linear change of variables over $\bR$ (see e.g. \cite[\S 3.5]{CP23}), we are able to improve necessary conditions on $s$ for several familiar classes of polynomials, and we state these consequences now.
First, we generalize the result of \cite{RVV06} to higher-degree forms that are diagonal in at least two coordinates and with different signs.

\begin{cor}\label{cor:diagonal_minus}
    Fix $n,k\geq 2$, and let $P\in \bR[X_1,...,X_n]$ be a polynomial of degree $k$ with leading form $P_k= c_1X_1^k-c_2X_2^k +Q$, where $c_1>0,c_2>0$ are real and $Q\in \bR[X_3,...,X_n]$ is a form of degree $k$. Suppose there exists a constant $C_s$ such that \eqref{eqn:operator_bounded} holds. Then $s\geq 1/2$ if $k=2$ and $s\geq 1/3$ if $k\geq 3$.
\end{cor}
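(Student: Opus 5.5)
Because the bound \eqref{eqn:operator_bounded} is invariant under nonsingular real linear changes of variables (\cite[\S 3.5]{CP23}), the plan is to find such a change of variables that puts $P_k=c_1X_1^k-c_2X_2^k+Q$ into the shape required by Theorem \ref{thm:main}. First rescale $X_1\mapsto c_1^{-1/k}X_1$ and $X_2\mapsto c_2^{-1/k}X_2$ to reduce to $X_1^k-X_2^k+Q(X_3,\dots,X_n)$. Then substitute $X_1=Y_1+Y_2$, $X_2=Y_2$, with $X_j=Y_j$ for $j\geq 3$. Writing $P_k$ in powers of $Y_1$ gives
\begin{align*}
(Y_1+Y_2)^k - Y_2^k + Q = Q + kY_1Y_2^{k-1} + \binom{k}{2}Y_1^2Y_2^{k-2}+\cdots+Y_1^k .
\end{align*}
The $Y_1$-free part $Y_2^k-Y_2^k+Q=Q$ plays the role of $Q_k$. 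The linear coefficient is $Q_{k-1}=kY_2^{k-1}$, which is not identically zero. The coefficient of $Y_1^i$ is $Q_{k-i}=\binom{k}{i}Y_2^{k-i}$ for $2\le i\le k$; in particular $Q_{k-2}\not\equiv 0$ and $Q_0=1\not\equiv0$.

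\textbf{Case $k\geq 3$.} Take $d=2$ in Theorem \ref{thm:main}. Since $Q_{k-2}$ does not vanish, that theorem only gives $s\geq s_2=1/4$, which is too weak. So instead we use Theorem \ref{thm:main_general} with $d=3$ and $\Sigma=\{X_3,\dots,X_n\}$, written in the $Y$ variables, which is a proper subset of $\{Y_2,\dots,Y_n\}$. The hypotheses to check are:
\begin{itemize}
\item[(i)] $Q_{k-i}$ must lie in $\bR[\Sigma]$ for $2\le i\le d-1=2$. But $Q_{k-2}=\binom k2Y_2^{k-2}$ involves $Y_2\notin\Sigma$, so this fails.
\end{itemize}
Hence a different change of variables is needed, one that removes the $Y_1^2$ term. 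The degree of freedom available is the linear substitution itself. Using $X_1=Y_1+Y_2$ and $X_2=\omega Y_1+Y_2$ makes the $Y_1$-linear coefficient $k(1-\omega)Y_2^{k-1}$ and the $Y_1^2$ coefficient $\binom k2(1-\omega^2)Y_2^{k-2}$. Choosing $\omega=-1$ kills the quadratic term while keeping the linear coefficient $2kY_2^{k-1}$ nonzero. Then $Q_{k-2}\equiv0$, which lies in $\bR[\Sigma]$ trivially, even with $\Sigma=\emptyset$. So Theorem \ref{thm:main} applies with $d=3$ and the $Y_1$-form
\begin{align*}
P_k=(Y_1+Y_2)^k-(Y_2-Y_1)^k+Q .
\end{align*}
If $k\geq3$, the $Y_1^3$ coefficient $2\binom k3Y_2^{k-3}$ is nonzero, so the vanishing condition for $s_\infty$ fails and we obtain $s\geq s_3=1/3$.

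\textbf{Case $k=2$.} The same substitution gives $(Y_1+Y_2)^2-(Y_2-Y_1)^2=4Y_1Y_2$. All $Q_{k-i}$ with $2\le i\le k=2$ vanish, so Theorem \ref{thm:main} with $d=2$ yields $s\geq s_\infty=1/2$.

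The main obstacle is choosing the substitution that eliminates the quadratic $Y_1$-term; the sign difference between the $X_1^k$ and $X_2^k$ coefficients is exactly what makes $\omega=-1$ real. What remains routine is checking that the change of variables is nonsingular, which holds since its determinant is $2\neq0$, and citing the invariance from \cite[\S 3.5]{CP23}.
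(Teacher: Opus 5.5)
Your proposal is correct and follows the paper's route. You make a nonsingular real linear change of variables that kills the coefficient of $X_1^2$ while keeping the coefficient of $X_1$ nonzero, then apply Theorem \ref{thm:main} with $d=3$ when $k\geq 3$, or with $d=2$ in the case where all $Q_{k-i}$ with $i\geq 2$ vanish when $k=2$. The only difference is that you give an explicit matrix (rescale, then $X_1=Y_1+Y_2$, $X_2=Y_2-Y_1$, which removes every even power of $Y_1$), whereas the paper's lemma solves $c'_{k-2}=0$, $c'_{k-1}\neq 0$ for a general $A\in\GL_2(\bR)$.
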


Similarly, we improve the result of \cite{ACP23} on odd-degree diagonal forms and extend this to odd-degree forms that are diagonal in at least two coordinates and with the same sign.

\begin{cor}\label{cor:diagonal_plus}
    Fix $n\geq 2 $ and $k\geq 3$ odd, and let $P\in \bR[X_1,...,X_n]$ be a polynomial of degree $k$ with leading form $P_k= c_1X_1^k+c_2X_2^k +Q$, where $c_1>0,c_2>0$ are real and $Q\in \bR[X_3,...,X_n]$ is a form of degree $k$. Suppose there exists a constant $C_s$ such that \eqref{eqn:operator_bounded} holds. Then $s\geq 1/3$.
\end{cor}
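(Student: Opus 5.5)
We deduce Corollary \ref{cor:diagonal_plus} from Theorem \ref{thm:main} after a nonsingular real linear change of variables; as recalled above, boundedness \eqref{eqn:operator_bounded} is invariant under such changes (see \cite[\S 3.5]{CP23}).

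\textbf{Rescaling.} First rescale $X_1\mapsto c_1^{-1/k}X_1$ and $X_2\mapsto c_2^{-1/k}X_2$. This reduces to the case $P_k=X_1^k+X_2^k+Q$ with $Q\in\bR[X_3,\dots,X_n]$.

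\textbf{Linear substitution.} Since $k$ is odd, $X_2^k=-(-X_2)^k$. Hence the substitution $X_1=Y_1+Y_2$, $X_2=-Y_2$, with $X_j=Y_j$ for $j\ge 3$, gives
\[
P_k=(Y_1+Y_2)^k-Y_2^k+Q=\sum_{i=1}^{k}\binom{k}{i}Y_1^iY_2^{k-i}+Q(Y_3,\dots,Y_n).
\]
This substitution is invertible.

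\textbf{Matching the shape \eqref{eqn:mainthm_cond}.} Written in powers of $Y_1$, the coefficient forms are
\begin{align*}
Q_k&=Q(Y_3,\dots,Y_n), & Q_{k-1}&=kY_2^{k-1}\not\equiv 0,\\
Q_{k-2}&=\binom{k}{2}Y_2^{k-2}\neq 0, & Q_0&=1.
\end{align*}
So the smallest admissible $d$ is $d=2$, and Theorem \ref{thm:main} only returns $s\ge 1/4$.

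To do better we must kill the $Y_1^2$ term. Note that $k\ge3$ is needed here, since for $k=2$ no such term is removable and the bound $s\ge 1/3$ would fail.

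\textbf{Removing the $Y_1^2$ term.} Use a better-adapted substitution. Let $\omega$ be a real number with $\omega^k=-1$, that is $\omega=-1$. Expand $P_k(aY_1+bY_2,\,cY_1+eY_2)$. We want $X_1^k+X_2^k$ to vanish to first order in $Y_1$ along the $Y_2$-axis, and the $Y_1^2$ coefficient to vanish, while the $Y_1^1$ coefficient survives.

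The $Y_1^j$ coefficient is
\[
\binom{k}{j}\bigl(a^jb^{k-j}+c^je^{k-j}\bigr)Y_2^{k-j}.
\]
Taking $e=-b$ (to kill $Y_2^k$), this coefficient is proportional to $b^{k-j}\bigl(a^j+(-1)^{k-j}c^j\bigr)$, and with $k$ odd we have $(-1)^{k-j}=-(-1)^j$. Thus:
\begin{itemize}
\item for $j=1$ it is proportional to $a+c$;
\item for $j=2$ it is proportional to $a^2-c^2$.
\end{itemize}
Killing $j=2$ forces $c=\pm a$, and then either $a+c=0$ (so the $Y_1$ term also dies) or $a=c$. In the case $a=c$, $e=-b$, invertibility requires $ae-bc=-2ab\ne0$, and the $Y_1$ coefficient is $2a\,k\,b^{k-1}\ne0$.

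With $a=c$ and $e=-b$, the general coefficient is $\binom{k}{j}a^j b^{k-j}\bigl(1-(-1)^{k-j}\bigr)$. This vanishes exactly when $k-j$ is even, that is, when $j$ is odd. So only odd powers of $Y_1$ survive, the $Y_1^2$ term is removed, and $Q_{k-3}\propto Y_2^{k-3}$ is nonzero. Since $k\ge3$ is odd, $j=3\le k$, so $d=3$ and Theorem \ref{thm:main} yields $s\ge s_3=1/3$.

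In fact the first substitution was already this case: $a=c=b=1$, $e=-1$ rewrites $X_1=Y_1+Y_2$, $X_2=Y_1-Y_2$. The expansion above mistakenly used $X_2=-Y_2$.

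\textbf{Main obstacle.} The only real point is this choice of substitution. The parity of $k$ is what makes all even powers of $Y_1$ cancel, while $Q_{k-1}=2k\,Y_2^{k-1}$ stays nonzero, as Theorem \ref{thm:main} requires.
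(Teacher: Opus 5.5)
Your argument is correct and is essentially the paper's: a $\GL_2(\bR)$ substitution in $(X_1,X_2)$ that makes $Q_{k-2}\equiv 0$ while keeping $Q_{k-1}\not\equiv 0$, followed by Theorem~\ref{thm:main} with $d=3$. Your explicit choice $X_1=Y_1+Y_2$, $X_2=Y_1-Y_2$ (after normalizing to $c_1=c_2=1$) is the member $a_{11}=a_{12}=a_{21}=1$, $a_{22}=-1$ of the paper's family \eqref{eqn:diagonal_a22}.

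In a write-up, make three corrections. First, fix the sign slip: with $a=c$, $e=-b$ the $Y_1^j$ coefficient is $\binom{k}{j}a^jb^{k-j}\bigl(1+(-1)^{k-j}\bigr)$, which vanishes for even $j$, consistent with your conclusion. Second, drop the abandoned first substitution. Third, drop the aside about $k=2$; it is unsupported, and false for the elliptic symbol, where Bourgain's necessary condition $s\ge n/(2(n+1))\ge 1/3$ holds.
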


In particular, for pure diagonal forms $X_1^k+\cdots + X_n^k$ where $k\geq 3$ is odd, this strengthens the threshold given by \cite{ACP23} for all $n$, as soon as $k\geq 4$.
Furthermore, we show that when $n=2$ and $k\geq 3$ is odd, pointwise convergence fails for $s<1/3$, for ``most'' forms $P_k$. This is motivated by the question of genericity, raised in \cite[\S 1.3]{CP23}; more precisely, what is the behaviour of \eqref{eqn:PDE} when $P_k$ is ``generic''? Let $M_{n,k}$ denote the moduli space of homogeneous polynomials with real coefficients, in $n$ variables and of degree $k$. Recall that a class of forms in $M_{n,k}$ is generic if it is dense in $M_{n,k}$ with respect to the Zariski topology.
 \begin{cor}\label{cor:binary}
    Fix $k\geq 3$ odd, and let $P\in \bR[X_1,X_2]$ be of degree $k$. There exists a generic class $G$ in the moduli space $M_{2,k}$ of binary forms of degree $k$ with real coefficients such that if the leading form $P_k$ belongs in $G$, then, if there exists a constant $C_s$ such that \eqref{eqn:operator_bounded} holds, then $s\geq 1/3$.
\end{cor}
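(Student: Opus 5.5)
The plan is to reduce to Theorem \ref{thm:main} with $n=2$ and $d=3$, using the fact that \eqref{eqn:operator_bounded} is invariant under nonsingular real linear changes of variables (\cite[\S 3.5]{CP23}). Such a change maps $P$ to a polynomial of the same degree whose leading form is $P_k$ composed with the linear map. In two variables every form $Q_{k-i}\in\bR[X_2]$ is just $c_iX_2^{k-i}$. So it suffices to find real coordinates $(Y_1,Y_2)$ in which three things hold: the coefficient of $Y_1^2Y_2^{k-2}$ in $P_k$ vanishes, the coefficient of $Y_1Y_2^{k-1}$ does not, and the variable singled out as $X_1$ in Theorem \ref{thm:main} is $Y_1$. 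Theorem \ref{thm:main} with $d=3\le k$ then gives $s\ge s_3=1/3$ (or even $s\ge s_\infty=1/2$), so $s\geq 1/3$ in either case.

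To find the coordinates, write $P_k(X_1,X_2)=\sum_j a_jX_1^jX_2^{k-j}$ and set $g(t)=P_k(t,1)$. Use the shear $X_1=Y_1+\alpha Y_2$, $X_2=Y_2$ with $\alpha\in\bR$ to be chosen. Then
\begin{align*}
P_k(Y_1+\alpha Y_2,Y_2)=Y_2^k\, g(\alpha+Y_1/Y_2)=\sum_{j=0}^k \frac{g^{(j)}(\alpha)}{j!}\,Y_1^jY_2^{k-j},
\end{align*}
so the coefficient of $Y_1^jY_2^{k-j}$ is $g^{(j)}(\alpha)/j!$. We therefore need a real $\alpha$ with $g''(\alpha)=0$ and $g'(\alpha)\neq 0$. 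If $a_k\neq 0$, then $g''$ has exact degree $k-2$. This degree is odd because $k$ is odd, and it is at least $1$ because $k\geq 3$. Hence $g''$ has a real root $\alpha$. This is the only place where oddness of $k$ is used.

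Define $G\subset M_{2,k}$ to be the set of forms satisfying $a_k\neq 0$ and $\mathrm{Res}(g',g'')\neq 0$. The second condition guarantees that $g'$ and $g''$ have no common root, so $g'(\alpha)\neq 0$ at the chosen $\alpha$. Both conditions are nonvanishing of polynomials in the coefficients $(a_0,\dots,a_k)$, so $G$ is Zariski open. It is nonempty: for $P_k=X_1^k+X_1X_2^{k-1}$ we have $g=t^k+t$, whose only root of $g''=k(k-1)t^{k-2}$ is $t=0$, and there $g'(0)=1\neq 0$. A nonempty Zariski open subset of the irreducible space $M_{2,k}\cong\bR^{k+1}$ is dense, so $G$ is generic.

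The main care point is bookkeeping rather than analysis. One must check that after the change of variables the leading form is exactly of the shape \eqref{eqn:mainthm_cond} with $d=3$, namely
\begin{align*}
Q_k+Y_1Q_{k-1}+Y_1^3Q_{k-3}+\cdots+Y_1^kQ_0, \qquad Q_{k-i}=\frac{g^{(i)}(\alpha)}{i!}\,Y_2^{k-i},
\end{align*}
with $Q_{k-1}=g'(\alpha)Y_2^{k-1}\not\equiv 0$. One must also check that the lower-degree terms of $P$ remain of lower degree, which holds because the map is linear. Both are immediate from the displayed expansion, and Theorem \ref{thm:main} then applies directly.
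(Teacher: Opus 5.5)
Your proof is correct and follows essentially the same route as the paper. Both use the shear $X_1\mapsto X_1+\alpha X_2$ (the paper's $a_{11}=a_{22}=1$, $a_{21}=0$), define the generic class by $a_k\neq 0$ together with nonvanishing of the resultant of $g'$ and $g''$ (the paper's $c'_{k-1}$ and $c'_{k-2}$), use the same witness $X_1^k+X_1X_2^{k-1}$, take an odd-degree real root of $g''$, and then apply Theorem~\ref{thm:main} with $d=3$; your Taylor-coefficient description $g^{(j)}(\alpha)/j!$ is simply a cleaner packaging of the paper's explicit coefficient formula.
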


We prove these corollaries in \S \ref{sec:special_cases}.

\subsection{Method of proof}\label{sec:method}
We follow the standard approach of constructing an explicit family $f_j$ of counterexamples that violate the bound \eqref{eqn:operator_bounded}. Thus to show Theorem \ref{thm:main_general}, it suffices to prove the following.

\begin{thm}\label{thm:main2}
    Let $n,k\geq 2$. Let $P\in \bR[X_1,...,X_n]$ be a polynomial of degree $k$ with leading form satisfying the hypotheses of Theorem \ref{thm:main_general}.
  Fix  $s< s_\infty$ if $Q_{k-i}$ is identically zero for all $d\leq i \leq k$, and otherwise, fix $s<s_d$.
Then there exists a sequence of real numbers $R_j\rightarrow \infty$ as $j\rightarrow \infty$ and a sequence of functions $f_j$ such that
    \begin{align*}
        \lim_{j\rightarrow \infty}\frac{\|\sup_{0<t<1} |T_t^Pf_j|\|_{L^1(B_n(0,1))}}{\|f_j\|_{H^s(\bR^n)}} = \infty. 
    \end{align*}
\end{thm}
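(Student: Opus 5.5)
The plan is to adapt the Rogers--Vargas--Vega wave-packet counterexample. The term $X_1Q_{k-1}(X')$ is linear in $X_1$, so on a suitable frequency region $e^{itP(\xi)}$ acts, to leading order, as a translation in the $x_1$ direction. That translation has speed $Q_{k-1}(\xi')$, where $\xi'=(\xi_2,\dots,\xi_n)$. A function concentrated in a thin slab $|x_1|\lesssim 1/N$ is then swept across the whole unit ball for times $t\in(0,1)$.

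\textbf{Construction.} Let $R=R_j\to\infty$ be a large parameter.
\begin{itemize}
\item Since $Q_{k-1}$ is a single monomial in the complement variables $\{X_2,\dots,X_n\}\setminus\Sigma$, pick a unit vector $e$ supported in those variables with $Q_{k-1}(e)\neq 0$.
\item Put $\xi_0'=Re$, so that $M:=|Q_{k-1}(\xi_0')|\sim R^{k-1}$.
\item Define $\hat f$ as a smooth bump adapted to $\{0\le \xi_1\le N\}\times\{\xi_0'+\eta : |\eta|\le c\}$.
\item Take the $\Sigma$-coordinates of $\eta$ in a box of small fixed size $c$, and the other coordinates in a box of size $c$.
\end{itemize}
The parameter $N$ is chosen below. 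Then $|f|\sim N$ on $\{|x_1|\lesssim 1/N,\ |x'|\lesssim 1\}$, and $\|f\|_{H^s}\sim R^sN^{1/2}$.

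\textbf{Phase analysis.} Expand
\[
x\cdot\xi+tP(\xi)=\xi_1\bigl(x_1+tQ_{k-1}(\xi_0')\bigr)+x'\cdot\xi'+tQ_k(\xi')+t\,\mathcal E(\xi,t).
\]
For $x$ in $B_n(0,1)$ with $x_1$ of the right sign, choose $t=t(x)=-x_1/Q_{k-1}(\xi_0')\in(0,1/M)$. This kills the main $\xi_1$-linear phase.

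We then show that every remaining phase term is either independent of $\xi_1$, or varies by $O(1)$ (small) over the support when $t\lesssim 1/M\sim R^{1-k}$.
\begin{itemize}
\item The terms independent of $\xi_1$ only affect the $\xi'$-integral, which is over a unit box. Their nonlinear part is controlled because the Hessian of $Q_k$ is $\lesssim R^{k-2}$, so $t\,\nabla^2Q_k=O(1/R)$.
\item The cross term $\xi_1\bigl(Q_{k-1}(\xi')-Q_{k-1}(\xi_0')\bigr)t$ is $\lesssim N R^{k-2}\cdot R^{1-k}=N/R$.
\item The terms $\xi_1^iQ_{k-i}(\xi')$ with $2\le i\le d-1$ depend only on the $\Sigma$-coordinates, which are $O(1)$. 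So they contribute $\lesssim N^{i}t\lesssim N^{d-1}R^{1-k}$, which is harmless.
\item The lower-order terms of $P$ (degree $<k$) are handled the same way and are smaller.
\item The dominant error comes from $\xi_1^iQ_{k-i}(\xi')$ with $i\ge d$. Here $Q_{k-i}(\xi')\lesssim R^{k-i}$, so the error is $\lesssim N^{i}R^{k-i}R^{1-k}=N^iR^{1-i}$. The worst case is $i=d$, giving $N^dR^{1-d}$.
\end{itemize}

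\textbf{Choice of $N$.} Take $N=cR^{(d-1)/d}$, or $N=cR$ if all $Q_{k-i}$ with $i\ge d$ vanish. In both cases all errors are $O(c)$.

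Then $|T^P_{t(x)}f(x)|\gtrsim N$ for a positive-measure portion of $B_n(0,1)$. Here we take $x'$ in a small ball and check, via the $x'$-integral, that the $\xi'$-factor has modulus $\gtrsim 1$ there. Hence $\|\sup_t|T^P_tf|\|_{L^1(B_n(0,1))}\gtrsim N$, and the ratio in Theorem \ref{thm:main2} is $\gtrsim N^{1/2}R^{-s}$. This diverges exactly when:
\begin{itemize}
\item $s<(d-1)/2d=s_d$, in the case $N=R^{(d-1)/d}$;
\item $s<1/2=s_\infty$, in the case $N=R$.
\end{itemize}

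\textbf{Main obstacle.} The delicate step is the bookkeeping for the intermediate terms $X_1^iQ_{k-i}$, $2\le i\le d-1$. This is exactly where the hypothesis that these $Q_{k-i}$ involve only the variables in $\Sigma$ is used: those variables are kept bounded while $\xi_0'$ grows only in the complement.

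We must also make sure that $Q_{k-1}$, being a monomial in the complement variables, does not pick up large contributions from mixed variables. In addition we must control the $\xi'$-integral uniformly in the chosen $t(x)$, i.e.\ verify that $t\,Q_k$ restricted to the unit box is essentially linear. If the $\Sigma$-coordinate box of size $c$ gives errors that are not uniformly small, I would instead shrink that box to size $R^{-\epsilon}$ and absorb the resulting loss into the strict inequality on $s$.
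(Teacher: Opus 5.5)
Your proposal is correct and follows essentially the paper's construction. It uses the same wave packet: $\xi_1$-spread $N=S_1$, centered at $(0,R')$ with vanishing $\Sigma$-coordinates. It uses the same time $t=-x_1/\partial_1P_k(0,R')$, and the same term-by-term bookkeeping that forces $S_1\lesssim R^{(d-1)/d}$, or $S_1\lesssim R$ when all $Q_{k-i}$ with $i\ge d$ vanish.

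The differences are cosmetic. You take a real unit vector $e$ with $Q_{k-1}(e)\neq 0$ instead of the Schwartz--Zippel integer point, and you estimate the phase as nearly constant on the support instead of using the integration-by-parts lemma. Your fallback of shrinking the $\Sigma$-box to size $R^{-\epsilon}$ is unnecessary: with $t\lesssim R^{1-k}$, every error term is already $O(c)$ uniformly.
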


To prove this, we will construct, for each large $R$, a function $f=f_R$ and a set $\Omega$ of $x$'s such that for each $x\in \Omega$, there is a $t\in (0,1)$ such that
    \begin{align*}
        \frac{|T_t^Pf(x)|}{\|f\|_{H^s(\bR^n)}}
    \end{align*}
tends to infinity as $R$ grows. As motivated in \cite[\S 2]{Pie20}, a natural candidate for $f$ is \[f=\phi(S_1x_1)e(R_1x_1)\prod_{j=2}^n \phi(x_j)e(R_jx_j),\] where $R_j\approx R$ is large, $S_1=R^\sigma$ with $0\leq \sigma \leq 1$, and $\phi$ is a Schwartz function such that $\phi(0)=1$ and whose Fourier transform is supported in $[-1,1]$. With this choice, we compute that $\|f\|_{H^s(\bR^n)}\ll_{\phi,n} R^s S_1^{-1/2}$. Then, upon showing that for each $x\in \Omega$, there exists $t\in (0,1)$ such that $|T_t^Pf(x)|\gg_{\phi,n,P} 1$, possibly subject to conditions on $S_1$ (equivalently, $\sigma$), we arrive at the lower bound $|T_t^Pf(x)| / \|f\|_{H^s(\bR^n)} \gg_{\phi,n,P}R^{\sigma/2 - s}$. This grows to infinity with $R$, as long as $s<\sigma/2$, so for $f$ to violate the upper bound \eqref{eqn:operator_bounded} for $s$ as large as possible, we must maximize $\sigma$.

Now we give a brief overview of our strategy to optimize the value of $\sigma$ and motivate the hypotheses on $P_k$ that appear in our main theorem.
We adapt ideas from the work of Rogers, Vargas, and Vega \cite{RVV06} on the non-elliptic Schr\"{o}dinger equation, where $P=X_1^2-X_2^2\pm X_3^2\pm \cdots \pm X_n^2$. Under a nonsingular, linear change of variables, this can be rewritten as $X_1X_2\pm X_3^2\pm \cdots \pm X_n^2$, where the presence of a linear term in $X_1$ and the absence of higher-degree terms in $X_1$ are particularly advantageous. Indeed, this allows for the scaling parameter $S_1$ of the first coordinate to be large while ensuring error terms (from integration by parts) are controlled. We extend this approach to arbitrary polynomials that have a nonzero linear term in $X_1$ and zero quadratic term in $X_1$ (after possibly renaming variables).
We note that in contrast to the recent work of \cite{ACP23,EPV22a,CP23, EY26} which adapted the key innovation of \cite{Bou16} of introducing exponential sums into the construction of counterexamples (see \cite{Pie20} for a detailed explanation), we do not insert such a sum here in $f$. Instead, we exploit the structure of $P_k$ satisfying the hypotheses of Theorem \ref{thm:main_general} to weaken the condition on $S_1$ which ultimately pushes the threshold of $s$ beyond $1/4$. 

We follow the standard framework of approximating $T_t^Pf$ in terms of $\phi$ by Fourier inversion. To do so, we insert $f$ as defined above into $T_t^Pf$ to get, roughly,
    \begin{align*}
        |T_t^Pf(x)| \approx |\int_{\bR^n} (\prod_{j=1}^n\hat{\phi}(\lambda_j))e((S_1\lam_1,\lam')\cdot(x_1,x') + \nabla P_k(R_1,R')t)e(h(\lambda)t)d\lambda|,
    \end{align*}
where $\lambda=(\lam_1,\lam')$, $\lam'=(\lam_2,...,\lam_n)$, $x'=(x_2,...,x_n)$, $R'=(R_2,...,R_n)$, and
 \begin{align*}
        h(\lam)=\sum_{2\leq |\alpha|\leq k} h_{k,\alpha}(\lam) + \sum_{0\leq i \leq k-1}\sum_{1
        \leq |\alpha|\leq i} h_{i,\alpha}(\lam),
    \end{align*}
where $h_{i,\alpha}(\lam) = \partial^\alpha P_i(R_1,R')(S_1\lam_1,\lam')^\alpha / \alpha!$.
If $h(\lam)t$ is sufficiently small, then by Fourier inversion,
    \begin{align*}
        |T_t^Pf(x)|\approx  |\phi(S_1(x_1+\partial_1 P_k(R_1, R')t))\prod_{j=2}^n \phi(x_j+\partial_j P_k(R_1, R')t) |.
    \end{align*}
For the right-hand side to be ``large'', we require that 
    \begin{align}\label{eqn:cond_t_method}
        t\approx -x_1/\partial_1P_k(R_1,R'),
    \end{align}
(which implicitly demands $\partial_1P_k(R_1,R')$ to be nonzero) and $\partial_j P_k(R_1,R') \ll \partial_1 P_k(R_1,R')$, so that the arguments of $\phi$ are near zero (so that $|T_t^P f(x)| \approx 1$). 
To ensure $h(\lam)t$ is small, we impose additional constraints on $t$, which in turn force restrictions on the size of $S_1$. The stringent cases come from the terms $h_{k,\alpha}(\lam)$ where $\alpha$ is such that $|\alpha|=\alpha_1 \geq 2$. Indeed, for $|\alpha|=\alpha_1 =i$ and for small $\lam$, we have $h_{k,\alpha}(\lam)t \approx (\partial^i P_k / \partial X_1^i)(R_1,R')S_1^i t$. Writing $P_k$ in the form of $ Q_k + X_1 Q_{k-1} + X_1^2Q_{k-2} + \cdots + X_1^k Q_0$, we see that $\partial^i P_k / \partial X_1^i \approx \sum_{j=i}^k X_1^{j-i} Q_{k-j}$. Setting $R_1=0$, we have $h_{k,\alpha}(\lam)t \approx Q_{k-i}S_1^i t.$ If $Q_{k-i}$ is not identically zero, then $ h_{k,\alpha}(\lam)t \approx R^{k-i}S_1^i t \approx R^k (S_1/R)^i t$. Since $S_1\leq R$, this is large when $i$ is small, and when compared with \eqref{eqn:cond_t_method}, this reveals the condition $\sigma \leq (i-1)/i$. In particular, when $i=2$, this is $\sigma \leq 1/2$, which had been the threshold for $\sigma$ in previous works. To break past this barrier, we apply the hypotheses on the structure of $P_k$. We see that the nonlinear, lower-degree terms in $X_1$ (say $X_1^i Q_{k-i}$ for $2\leq i \leq d-1$) contribute to more deviation from the linear term. Thus, assuming these terms vanish identically
allows us to relax the condition on $S_1$, from $\sigma\leq 1/2$ to $\sigma\leq (d-1)/d$.

\subsection{Outline of paper}
In \S \ref{sec:initial_def}, we define the functions $f=f_R$ and give an upper bound on the $H^s$ norm. In \S \ref{sec:compute_T}, we compute the size of $|T_t^Pf|$. In \S \ref{sec:proof}, we prove Theorem \ref{thm:main2}, and in \S \ref{sec:special_cases}, we prove Corollaries \ref{cor:diagonal_minus} to \ref{cor:binary}.

\subsection{Notation}
For a real number $\alpha$, we use the convention $e(\alpha)=e^{i\alpha}$. Correspondingly, we use the normalization $\hat{f}(\xi) = \int_{\bR^r}f(x) e^{-i x\cdot \xi} dx$, and so $f(x) = (2\pi)^{-r}\int_{\bR^r} \hat{f}(\xi) e^{i x \cdot \xi} d\xi$.
The Sobolev space $H^s(\bR^r)$ is defined to be all $f \in \mathcal{S}'(\bR^r)$ with finite Sobolev norm
\[ \|f\|_{H^s(\bR^r)}^2 = \frac{1}{(2\pi)^r} \int_{\bR^r} (1+|\xi|^2)^s |\hat{f}(\xi)|^2 d\xi.\]
We let $A\ll B$ denote $|A|\leq C|B|$ for some constant $C>0$ and $A\ll_\alpha B$ if the constant $C$ depends on $\alpha$.  
For a multi-index $\alpha=(\alpha_1,...,\alpha_r)$, let $|\alpha| = \alpha_1+\cdots + \alpha_r$ and $\alpha! = \alpha_1!\cdots \alpha_r!$, and for a polynomial $F(X_1,...,X_r)$ in $r$ variables, let $\partial^\alpha F = \partial^{|\alpha|}F/(\partial X_1^{\alpha_1}\cdots \partial X_r^{\alpha_r})$.

\section{The initial definition}\label{sec:initial_def}

We begin the construction of the counterexample functions, building on the works of \cite{RVV06,Pie20,CP23}.    
Let $P\in \bR[X_1,...,X_n]$ be a polynomial of degree $k$ with leading form satisfying the hypotheses of Theorem \ref{thm:main_general}, and suppose without loss of generality that $\Sigma=\{X_2,...,X_m\}$ with $m<n$.
Fix parameters $R,S_1\geq 1$ and let $S_1=R^\sigma$ where $0\leq \sigma\leq 1$. 

To guarantee that the value of $t$ we later choose is small enough (more precisely, in \eqref{eqn:cond_t} and \eqref{eqn:cond_partial}), we require the following lower bound on a form evaluated at certain values.

\begin{lemma}\label{lem:coefficients}
  Let $F \in \bR[X_1,...,X_r]$ be a nonzero form of degree $e \geq 1$.
  There exists a tuple $(N_1,...,N_r)\in \bZ^r$, with  $N_i\geq 1$ for all $i$, such that for all $R \geq 1,$
   \[
        |F(N_1R,N_2R,...,N_rR)| \gg_F R^e.
\]
 \end{lemma}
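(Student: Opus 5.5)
The plan is to use homogeneity to reduce the claim to finding one integer point with strictly positive coordinates at which $F$ does not vanish. Since $F$ is a form of degree $e$, for any $N=(N_1,\dots,N_r)$ and any $R\geq 1$,
\[
F(N_1R,\dots,N_rR)=R^eF(N_1,\dots,N_r).
\]
Hence if $F(N)\neq 0$, the bound $|F(NR)|=|F(N)|R^e$ holds for all $R\geq 1$, with implied constant $|F(N)|^{-1}$. That constant depends only on $F$ once $N$ has been fixed in terms of $F$. So it suffices to exhibit $N\in\bZ^r$ with all $N_i\geq 1$ and $F(N)\neq 0$.

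For the existence step, I will use the standard fact that a nonzero polynomial over an infinite field cannot vanish on a product set $S_1\times\cdots\times S_r$ with every $|S_i|>e$. This follows by induction on $r$. When $r=1$, a nonzero one-variable polynomial of degree at most $e$ has at most $e$ roots. For the inductive step, write
\[
F=\sum_j c_j(X_1,\dots,X_{r-1})X_r^j
\]
with some $c_j\not\equiv 0$. By the inductive hypothesis, choose $(N_1,\dots,N_{r-1})$ in the product set with $c_j(N_1,\dots,N_{r-1})\neq 0$. Then $F(N_1,\dots,N_{r-1},X_r)$ is a nonzero polynomial in $X_r$ of degree at most $e$, so it has a non-root $N_r\in S_r$.

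I will take $S_i=\{1,2,\dots,e+1\}$ for each $i$. This yields $N\in\{1,\dots,e+1\}^r$ with $F(N)\neq 0$, and in particular every $N_i\geq 1$, as required. Note that the coefficients of $F$ being real rather than rational plays no role, since only nonvanishing at an integer point is needed.

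No step is a real obstacle. The only point needing care is ensuring the chosen $N$ has all coordinates at least $1$, rather than simply using that a nonzero polynomial is nonzero somewhere. Restricting the product set to positive integers handles this.
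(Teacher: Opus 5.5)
Your proof is correct and follows essentially the same route as the paper: homogeneity reduces the claim to finding a positive integer point with $F(N)\neq 0$, and a Schwartz--Zippel-type argument supplies that point. The only difference is that you prove the grid non-vanishing by a self-contained induction on $\{1,\dots,e+1\}^r$, which gives an explicit box, whereas the paper cites the counting bound $\ll_e B^{r-1}$ for the zeros in $[1,B]^r$ and takes $B$ large.
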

\begin{proof}
    For any $(N_1,...,N_{r})\in \bZ^{r}$, we have $F(N_1R,N_2R,...,N_rR) = R^{e}F(N_1,...,N_{r})$ by homogeneity, so it suffices to show that there exist integral $N_1,...,N_{r}$ such that $F(N_1,...,N_{r}) \neq 0$. For any $B\geq 1$, $|\{(x_1,...,x_r)\in \bZ^r \cap [1,B]^r: F(x_1,...,x_r)=0\}|\ll_e B^{r-1}$ by the Schwartz-Zippel bound (see e.g. \cite[Lemma 10.1]{BCLP22}). Since there are $B^r$ integral points in that box, there exists sufficiently large $B$ such that there is an integral point, say $(N_1,...,N_{r}) \in [1,B]^r$, satisfying
    $F(N_1,...,N_{r})\neq 0.$
\end{proof}

Let $(N_{m+1},...,N_n)$ be as provided by Lemma \ref{lem:coefficients} for the form $Q_{k-1}(X_2,...,X_n)|_{X_i=0, \,i\in \Sigma}$. We check that this form is not identically zero since, by supposition, $Q_{k-1}$ consists of a monomial only in the variables of $\{X_2,...,X_n\}\setminus \Sigma$. 
Fix a Schwartz function $\phi:\bR\rightarrow \bR$ such that $\phi(0)=1$ and $\hat{\phi}$ supported on $[-1,1]$. 
Define
    \begin{align}\label{eqn:def_f}
        f(x) = \phi(S_1x_1) [\prod_{j=2}^m \phi(x_j)][\prod_{j=m+1}^n\phi(x_j)e(N_jRx_j)].
    \end{align}
Then its Fourier transform is
    \begin{align*}
        \hat{f}(\xi)=S_1^{-1}\hat{\phi}(\frac{\xi_1}{S_1})
        [\prod_{j=2}^m \hat{\phi}(\xi_j)][
        \prod_{j=m+1}^n \hat{\phi}(\xi_j-N_jR)].
    \end{align*}

\subsection{An upper bound on the $H^s$ norm} By definition,
    \begin{align*}
       \|f\|_{H^s(\bR^n)}^2 
       &= \frac{1}{(2\pi)^n} \int_{\bR^n} (1+|\xi|^2)^s|\Hat{f}(\xi)|^2 d\xi\\
      & =\frac{1}{(2\pi)^n} \int_{\bR^n} (1+|\xi|^2)^s|S_1^{-1}\hat{\phi}(\frac{\xi_1}{S_1})[\prod_{j=2}^m \hat{\phi}(\xi_j)][
        \prod_{j=m+1}^n \hat{\phi}(\xi_j-N_jR)]|^2 d\xi.   
    \end{align*}
Recall that $\hat{\phi}$ is supported in $[-1,1]$, so
    \begin{align*}       \|f\|_{H^s(\bR^n)}^2&\ll_n R^{2s}\int_{-S_1}^{S_1}\int_{[-1,1]^{m-1}} \int_{[N_jR-1,N_jR+1]^{n-m}} S_1^{-2}|\hat{\phi}(\frac{\xi_1}{S_1})[\prod_{j=2}^m \hat{\phi}(\xi_j)][
        \prod_{j=m+1}^n \hat{\phi}(\xi_j-N_j R)]|^2 d\xi.
    \end{align*}
Let $\lam_1=\xi_1/S_1$, $\lam_j=\xi_j$ for $2\leq j \leq m$, and $\lam_j=\xi_j-N_jR$ for $m+1\leq j \leq n$. Then $d\lam_1 = S_1^{-1}d\xi_1$, and
    \begin{align}\label{eqn:f_Hnorm}
     \|f\|_{H^s(\bR^n)}^2&\ll_n R^{2s} \int_{[-1,1]^n}S_1^{-1}|\prod_{j=1}^n \hat{\phi}(\lam_j)|^2 d\lam   
     \ll_n R^{2s}S_1^{-1}\|\hat{\phi}\|_{L^2(\bR)}^{2n}.
    \end{align}

\section{Bounding $|T_t^Pf(x)|$}\label{sec:compute_T}

Fix $0<c_0<1/2$. Since $\phi$ is smooth and $\phi(0)=1$, there exists a small constant $\delta_0>0$ depending on $c_0,\phi$ such that 
    \begin{align}\label{eqn:phi_delta}
        \phi(y)\geq 1-c_0, \quad  \text{for all } |y|\leq \delta_0.
    \end{align}
 The main result of this section is the following.

\begin{prop}\label{prop:approx_maximal} Let $0<c_0<1/2$ be a small constant and $\delta_0$ be as in \eqref{eqn:phi_delta}. Suppose $\sigma \leq 1$, if $Q_{k-i}$ vanishes for all $d\leq i \leq k$, and otherwise, suppose $\sigma\leq (d-1)/d$. Then there exist constants $0< c_1(\delta_0,P_k),c_2(P_k)<1$ such that for all  $c_1<c_1(\delta_0,P_k),c_2<c_2(P_k)$ and $0<c_3<1$ as small as we like, the following holds. 

Let $R$ be sufficiently large depending on $c_1,c_2,\phi,\sigma,P_k$.
Let $x\in [-c_1,-c_1/2]\times [-c_1,c_1]^{n-1}$ and suppose $t\in (0,1)$ satisfies  
    \begin{align*}
        t &= \frac{-x_1}{\partial_1 P_k(0,R')} + \tau, \qquad \text{where } |\tau| \leq \frac{c_2\delta_0}{S_1R^{k-1}},\qquad \text{and }
        t \leq \frac{c_3}{R^{k-1}},
    \end{align*}
    where $R'=(R_2,...,R_n)$ with $R_j=0$ for $2\leq j \leq m$ and $R_j = N_j R$ for $m+1\leq j \leq n$.
Then $ |T_t^{P}f(x)| \geq (2\pi)^{-n}(1-c_0)^n + E$, where $ E \ll_{\phi,n,P_k}c_3.$
\end{prop}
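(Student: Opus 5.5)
We plan to substitute $\hat f$ into the definition of $T_t^P f$, change variables as in the $H^s$ computation ($\xi_1=S_1\lambda_1$, $\xi_j=\lambda_j$ for $2\le j\le m$, $\xi_j=N_jR+\lambda_j$ for $j>m$), and Taylor expand $P$ about the point $(0,R')$. This gives
\begin{align*}
T_t^Pf(x)=\frac{e(\cdot)}{(2\pi)^n}\int_{[-1,1]^n}\Big(\prod_{j=1}^n\hat\phi(\lambda_j)\Big)\, e\big((S_1\lambda_1,\lambda')\cdot(x+\nabla P(0,R')t)\big)\, e(h(\lambda)t)\,d\lambda,
\end{align*}
with $|e(\cdot)|=1$ and $h$ collecting all Taylor terms of order $\ge 2$, together with the contributions of the lower-order forms $P_i$, $i<k$. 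We then write $e(h(\lambda)t)=1+O(|h(\lambda)|t)$. Fourier inversion turns the main term into $(2\pi)^{-n}\phi(S_1(x_1+\partial_1P(0,R')t))\prod_{j\ge2}\phi(x_j+\partial_jP(0,R')t)$. The error is $E\ll \|\hat\phi\|_{L^1}^n\sup_{\lambda\in[-1,1]^n}|h(\lambda)|t$, so it suffices to show $\sup|h|\,t\ll c_3$ and that every argument of $\phi$ has modulus at most $\delta_0$, which gives the main term $\ge(2\pi)^{-n}(1-c_0)^n$ by \eqref{eqn:phi_delta}.

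For the arguments of $\phi$, observe that $\partial_1P_k(0,R')=Q_{k-1}(R')$. Since $R_j=0$ on $\Sigma$, this equals $R^{k-1}$ times the nonzero value provided by Lemma \ref{lem:coefficients}, so $|\partial_1P_k(0,R')|\asymp R^{k-1}$. Lower-degree parts of $\partial_1P$ contribute $O(R^{k-2})$; we replace $\partial_1P$ by $\partial_1P_k$ in the choice of $t$ and absorb the difference into the error. With $t=-x_1/\partial_1P_k+\tau$, the first argument is $S_1(\partial_1P_k\,\tau+O(R^{k-2}t))$, of size $\le |Q_{k-1}(N)|c_2\delta_0+O(S_1R^{-1})$. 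Taking $c_2(P_k)$ small and $R$ large makes this $\le\delta_0$ when $\sigma<1$; when $\sigma=1$ the $R^{k-2}t$ term has to be retained inside the definition of $t$. For $j\ge2$ we have $|x_j+\partial_jP(0,R')t|\le c_1+O(R^{k-1}t)$ with $t\ll c_1R^{1-k}$, which is $\ll c_1\le\delta_0$ once $c_1(\delta_0,P_k)$ is small.

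The main obstacle is bounding $h(\lambda)t$. A term with multi-index $\alpha$ contributes $\partial^\alpha P_k(0,R')S_1^{\alpha_1}\lambda^\alpha/\alpha!$, of size $R^{k-|\alpha|}S_1^{\alpha_1}$. If $\alpha_1\le1$ and $|\alpha|\ge2$, this is at most $R^{k-2}S_1\le R^{k-1}$, so multiplied by $t\le c_3R^{1-k}$ it is $O(c_3)$. When $\alpha_1=i\ge2$, $\partial^\alpha P_k(0,R')$ involves only $\partial^{\alpha'}Q_{k-i}(R')$. For $2\le i\le d-1$ we have $Q_{k-i}\in\bR[X_2,\dots,X_m]$; any $\alpha'$-derivative leaving a positive-degree form in these variables vanishes at $R'$, since those coordinates are zero. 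The remaining case is when $\alpha'$ exhausts all of $Q_{k-i}$, which gives a constant term of size $O(S_1^{i})=O(R^{(d-1)\sigma})$; this must be checked against $R^{k-1}$, and it is fine because $|\alpha|=k$ there. For $i\ge d$ the size is $R^{k-i}S_1^i\le R^{k-1}$ exactly when $\sigma\le(i-1)/i$, which is guaranteed by $\sigma\le(d-1)/d$ and is vacuous when all such $Q_{k-i}$ vanish. Lower-degree $P_i$ terms are bounded by $R^{i-|\alpha|}S_1^{\alpha_1}\le R^{k-1}$ similarly, with the $\sigma=1$ top-degree-in-$X_1$ term handled by the extra factor $R^{-1}$ from degree. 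Summing the finitely many terms gives $E\ll_{\phi,n,P_k}c_3$.
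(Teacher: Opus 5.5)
Your proposal follows the paper's proof essentially step for step. You use the same change of variables, the Taylor expansion about $(0,R')$, and Fourier inversion for the main term. You bound the error term by term by $R^{k-|\alpha|}S_1^{\alpha_1}t$. You use that $\partial_1^{i}P_k(0,R')=i!\,Q_{k-i}(R')$ vanishes for $2\le i\le d-1$ because of $\Sigma$, and that $\sigma\le (d-1)/d$ handles $i\ge d$. Replacing the integration-by-parts lemma by $|e(ht)-1|\le |h|t$ is harmless, since $h$ has no constant term in $\lambda$.

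One correction: for $\sigma=1$ you do not need to redefine $t$. The lower-order linear terms satisfy $|(S_1\lambda_1,\lambda')\cdot\nabla(P-P_k)(0,R')|\,t\ll S_1R^{k-2}\cdot c_3R^{1-k}\le c_3$. So you can place them in $h$, as the paper does, and the proposition then holds with $t=-x_1/\partial_1P_k(0,R')+\tau$ exactly as stated.
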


By a change of variables $\lam_1=\xi_1/S_1$, $\lam_j=\xi_j$ for $2\leq j \leq m$, and $\lam_j=\xi_j-N_jR$ for $m+1\leq j \leq n$,
    \begin{align*}
        T_t^Pf(x)
        =\frac{1}{(2\pi)^n}\int_{\bR^n} \hat{\Phi}(\lam)e(x_1 S_1\lam_1 +\sum_{j=2}^m x_j \lam_j+\sum_{j=m+1}^n x_j(\lam_j+N_jR)
        +P(S_1\lam_1,\lam'+R')t)d\lam,
    \end{align*}
where $\lam = (\lam_1,\lam')$, $\lam' = (\lam_2,...,\lam_n)$, and $\Phi(\lam) = \phi(\lam_1)\cdots \phi(\lam_n)$. Rearranging terms,
    \begin{align}\label{eqn:T_1}
        T_t^Pf(x)= \frac{1}{(2\pi)^n}e(
        \sum_{j=m+1}^n x_jN_jR)\int_{\bR^n} \hat{\Phi}(\lam)e((S_1\lam_1,\lam')\cdot (x_1,x') +P((S_1\lam_1,\lam')+(0,R'))t)d\lam,
    \end{align}
    where $x' = (x_2,...,x_n)$.
To apply Fourier inversion, we remove from the integral higher degree terms in $\lam$. Write $P=P_k+P_{k-1}+\cdots + P_0$, where $P_i$ is homogeneous of degree $i$. By Taylor expansion, 
\begin{align*}
    P((S_1\lam_1,\lam')+(0,R'))
    =P(0,R')+\nabla P_k(0, R')\cdot (S_1\lam_1,\lam') + h(\lam;R',S_1),
\end{align*}
where
\begin{align}\label{eqn:def_h2}
        h(\lam;R',S_1)=\sum_{2\leq |\alpha|\leq k} h_{k,\alpha}(\lam;R',S_1) + \sum_{0\leq i \leq k-1}\sum_{1
        \leq |\alpha|\leq i} h_{i,\alpha}(\lam;R',S_1)
    \end{align}
where $h_{i,\alpha}(\lam;R',S_1) = \partial^\alpha P_i(0,R')(S_1\lam_1,\lam')^\alpha / \alpha!$.
 Then the integral in \eqref{eqn:T_1} is
    \begin{align}\label{eqn:T_2}
      e(P(0,R')t) \int_{[-1,1]^n} \hat{\Phi}(\lam)e((S_1\lam_1,\lam')\cdot((x_1,x')+\nabla P_k(0, R')t)+h(\lam;R',S_1) t)d\lam.
    \end{align}
An application of integration by parts (see e.g. \cite[Lemma 4.4]{CP23}) removes the $e(h(\lam;R',S_1)t)$ term, so that \eqref{eqn:T_2} is
\begin{align}\label{eqn:T_3}
    e(P(0,R')t) e(h(1,...,1; R',S_1)t)\int_{[-1,1]^n} \hat{\Phi}(\lam)e((S_1\lam_1,\lam')\cdot((x_1,x')+\nabla P_k(0,R')t))d\lam +E_1,
\end{align}
where
 \begin{align}\label{eqn:E1_PS}
        E_1\ll_{\phi,n}  \sup_{\kappa\in \{0,1\}^n, |\kappa|\geq 1}\sup_{y\in[-1,1]^n}|\frac{\partial^{|\kappa|}}{\partial y_1^{\kappa_1}\cdots \partial y_n^{\kappa_n}} h(y;R',S_1) t|.
    \end{align}
Upon applying Fourier inversion to the integral in \eqref{eqn:T_3}, we get
    \begin{align*}
        T_t^Pf(x) = M + E,
    \end{align*}
where $ |M| = (2\pi)^{-n} |\phi(S_1(x_1+\partial_1 P_k(0, R')t))\prod_{i=2}^n \phi(x_i+\partial_i P_k(0,R')t)|$
and $ |E| = (2\pi)^{-n}|E_1|.$

We now choose $t$ to give a lower bound on the main term $M$.
Let
    \begin{align}\label{eqn:cond_t}
        t = -\frac{x_1}{\partial_1 P_k(0,R')} + \tau, \qquad |\tau|\leq \frac{c_2\delta_0}{S_1R^{k-1}},
    \end{align}
where, by Lemma \ref{lem:coefficients} and by hypothesis that $Q_{k-1}$ consists of a monomial only in the variables of $\{X_2,...,X_n\} \setminus \Sigma = \{X_{m+1},...,X_n\}$, we have
    \begin{align}\label{eqn:cond_partial}
      \partial_1P_k(0,R') = Q_{k-1}(R')\gg_{P_k} R^{k-1}.
    \end{align}
Choose $0<c_2<1/2$ (depending on $P_k$) sufficiently small so that $c_2 |\partial_i P_k(0,R')| / R^{k-1}<1/2$ for all $1\leq i \leq n$. Let $x\in [-c_1,c_1]^n$, where $0<c_1<1/2$ (depending on $P_k$) satisfies $c_1<\delta_0/4$ and $c_1|\partial_i P_k(0,R')/\partial_1 P_k(0,R')|<\delta_0/4$ for all $2\leq i \leq n$.
Then
    \begin{align*}
       | S_1(x_1+\partial_1 P_k(0,R')t)| = |S_1 \partial_1 P_k(0,R')\tau| \leq c_2\delta_0\frac{| \partial_1 P_k(0,R')|}{R^{k-1}}< \frac{\delta_0}{2},
    \end{align*}
and so by \eqref{eqn:phi_delta}, $   \phi(S_1(x_1+\partial_1 P_k(0,R')t))\geq 1-c_0.$
Similarly, for $2\leq i \leq n$,
    \begin{align*}
        |x_i+\partial_i P_k(0,R')t| \leq |x_i|+| x_1\frac{\partial_i P_k(0,R')}{\partial_1 P_k(0,R')} |+ c_2\delta_0 \frac{|\partial_i P_k(0,R')|}{S_1R^{k-1}}<\delta_0,
    \end{align*}
    so that
    $\phi(x_i+\partial_i P_k(0,R')t)\geq 1-c_0$. Together, $|M| \geq (2\pi)^{-n}(1-c_0)^n.$
To ensure $t\in (0,1)$, suppose without loss of generality that $\partial_1 P_k(0,R')>0$, and let $x_1\in [-c_1,c_1/2]$. Then for sufficiently large $R$ (depending on $c_1,c_2,\phi,\sigma,P_k$), we have $c_1/\partial_1 P_k(0,R')  + c_2/S_1R^{k-1}<1$ and $c_1/2\partial_1 P_k(0,R')  - c_2/S_1R^{k-1}>0$.
    
It remains to bound the error term $E_1$ (and hence $E$). We claim that
    \begin{align}\label{eqn:claim_E}
        E_1\ll_{\phi,n,P_k} R^{k-1}t,
    \end{align}
    subject to the condition $S_1\ll R$ if $Q_{k-i}$ is identically zero for all $d\leq i \leq k$, and $S_1\ll R^{(d-1)/d}$ otherwise.
Recalling \eqref{eqn:def_h2} and \eqref{eqn:E1_PS}, note a priori that for any $0\leq i \leq k$ and $1\leq |\alpha|\leq i$,
\begin{align}\label{eqn:E1_bound}
      \sup_{\kappa\in \{0,1\}^n,|\kappa| \geq 1}\sup_{y\in[-1,1]^n}|\frac{\partial^{|\kappa|}}{\partial y_1^{\kappa_1}\cdots \partial y_n^{\kappa_n}} h_{i,\alpha}(\lam;R',S_1)
        |\ll_{P_k} R^{i-|\alpha|} S_1^{\alpha_1}.
    \end{align}
It suffices to show that this is at most $R^{k-1}$ for the cases $i=k$ with $2\leq |\alpha|\leq k$ and $0\leq i \leq k-1$ with $1 \leq |\alpha|\leq i$.
For $0\leq i \leq k-1$, the right-hand side of \eqref{eqn:E1_bound} is at most $R^{k-1}$ since $S_1\leq R$.
    So suppose $i=k$ and $2\leq |\alpha|\leq k$.
If $\alpha_1=0$, then this is bounded above by $R^{k-2}$. If $|\alpha|>\alpha_1\geq 1$, then this is at most $R^{k-1}(S_1/R)^{\alpha_1}\ll R^{k-1}$.

Lastly, we treat the case $|\alpha|=\alpha_1\geq 2$, for which we exploit the shape of $P_k$. We consider two cases. First suppose $Q_{k-i}$ is identically zero for all $d\leq i \leq k$, that is, $P_k=Q_k+X_1Q_{k-1}+X_1^2Q_{k-2}+\cdots + X_1^{d-1}Q_{k-(d-1)}$. If $\alpha_1 > d-1$, then $\partial^\alpha P_k$ vanishes. Otherwise if $2\leq \alpha_1\leq d-1$, then $\partial^\alpha P_k = \sum_{\alpha_1\leq i \leq d-1} \alpha_1! \binom{i}{\alpha_1}X_1^{i-\alpha_1} Q_{k-i}$. By hypothesis, $Q_{k-i}\in \bR[X_2,...,X_m]$ for $2\leq i \leq d-1$ and $R'=(R_2,...,R_n)$ with $R_j=0$ for $2\leq j \leq m$. Hence $\partial^\alpha P_k (0,R')=0$, so \eqref{eqn:claim_E} is satisfied without additional conditions on $S_1$ other than $S_1\leq R$. 
Now consider the second case where not all $Q_{k-i}$, $d \leq i \leq k$, are identically zero.
If $\alpha_1>d-1$, 
then the right-hand side of \eqref{eqn:E1_bound} is $R^{k-\alpha_1}  S_1^{\alpha_1} = R^k(S_1/R)^{\alpha_1} \ll R^k(S_1/R)^d$,
where $R^k(S_1/R)^d \ll R^{k-1}$ if and only if $S_1^d \ll R^{d-1}$. This forces the condition
    \begin{align}\label{eqn:cond_sigma}
        \sigma\leq (d-1)/d.
    \end{align}
On the other hand, if $2\leq \alpha_1\leq d-1$, then $\partial^\alpha P_k = 
          \sum_{\alpha_1\leq i \leq k} \alpha_1!\binom{i}{\alpha_1}  X_1^{i-\alpha_1} Q_{k-i}$.
By considering cases $\al_1\leq i\leq d-1$ and $d\leq i \leq k$, we deduce $\partial^\alpha P_k(0,R')=0$, which proves the claim \eqref{eqn:claim_E}.

Finally, we impose that $  t\leq c_3/R^{k-1}$
for some $0<c_3<1$ as small as we like. This is compatible with $\eqref{eqn:cond_t}$ by $\eqref{eqn:cond_partial}$. Then $   E\ll_{\phi,n,P_k} R^{k-1}t\leq c_3,$
which concludes the proof of Proposition \ref{prop:approx_maximal}.

\section{Proof of Theorem \ref{thm:main2}}\label{sec:proof}
Let $\Omega = [-c_1,-c_1/2]\times [-c_1,c_1]^{n-1}$. Then by Proposition \ref{prop:approx_maximal}, for all $x\in \Omega$, there exists $t\in (0,1)$ such that $|T_t^Pf(x)|\geq (2\pi)^{-n}(1-c_0)^n  - |E|$ where $E\ll_{\phi,n,P_k}c_3$ for some $c_3$ as small as we like. In particular, choose $c_3$ sufficiently small (depending on $\phi,n,P_k$) so that $|E|\leq  (1/2)(2\pi)^{-n}(1-c_0)^n$. Then $|T_t^Pf(x)|\geq (1/2)(2\pi)^{-n}(1-c_0)^n$. Combining this with the $H^s$ norm of $f$ in \eqref{eqn:f_Hnorm}, we conclude that 
    \begin{align*}
        \frac{\|\sup_{0<t<1} |T_t^Pf|\|_{L^1(B_n(0,1))}}{\|f\|_{H^s(\bR^n)}} \geq \frac{(1-c_0)^n}{2(2\pi)^n}c_1^n R^{-s}S_1^{1/2}\|\hat{\phi}\|^{-n}_{L^2(\bR^n)} \gg_{\phi,n,P_k} R^{\sigma/2-s}.
    \end{align*}
The right-hand side tends to infinity as long as $s<\sigma/2$. To maximize this, choose $\sigma=1$ in the case that $Q_{k-i}$ vanishes for all $d\leq i \leq k$, and otherwise, with respect to the condition \eqref{eqn:cond_sigma}, choose $\sigma = (d-1)/d$.

\section{Special cases}\label{sec:special_cases}

\subsection{Proof of Corollaries \ref{cor:diagonal_minus} and \ref{cor:diagonal_plus}}
It suffices to show that for a binary diagonal form $F=c_1X_1^k+c_2X_2^k$ whose degree is odd if $c_1,c_2$ have the same sign, there exists a change of variables $A\in \GL_2(\bR)$ under which $F$ can be written in the form $Q_k+X_1Q_{k-1}+X_1^3Q_{k-3}+\cdots + X_1^kQ_0$, where $Q_{k-i}\in \bR[X_2]$ has degree $k-i$ and $Q_{k-1}$ is not identically zero. Then, under the change of variables defined by the diagonal block matrix with blocks $A$ and the identity matrix $I_{(n-2)\times (n-2)}$, the form $P_k = c_1X_1^k+c_2X_2^k + Q(X_3,...,X_n)$, in the case $k\geq 3$, can be written in the shape of \eqref{eqn:mainthm_cond} with $d=3$, while in the case $k=2$, $P_k$ can be written in the shape $Q_k+X_1Q_{k-1}$.

\begin{lemma} Fix $k\geq 2$ and let $F = c_1X_1^k+c_2X_2^k$. If $c_1,c_2$ have the same sign, then further assume that $k$ is odd. Then there exists a change of variables $A\in \GL_2(\bR)$ such that $F(AX)$ is of the form $Q_k+X_1Q_{k-1}+X_1^2Q_{k-2}+\cdots + X_1^kQ_0$, where $Q_{k-i}\in \bR[X_2]$ has degree $k-i$ and such that $Q_{k-1}$ is not identically zero and $Q_{k-2}$ is identically zero.
\end{lemma}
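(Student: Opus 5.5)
We look for a linear substitution $X_1 \mapsto a_1X_1 + b_1X_2$, $X_2 \mapsto a_2X_1+b_2X_2$ with $a_1b_2-a_2b_1\neq 0$, and expand $F(AX)$ in powers of $X_1$. By the binomial theorem, the coefficient of $X_1^iX_2^{k-i}$ in $F(AX)$ is
\[
\binom{k}{i}\bigl(c_1 a_1^i b_1^{k-i} + c_2 a_2^i b_2^{k-i}\bigr).
\]
Hence $Q_{k-i} = \binom{k}{i}(c_1a_1^ib_1^{k-i}+c_2a_2^ib_2^{k-i})X_2^{k-i}$. The two requirements become
\begin{align*}
&c_1 a_1^2 b_1^{k-2} + c_2 a_2^2 b_2^{k-2} = 0 \quad (Q_{k-2}\equiv 0),\\
&c_1 a_1 b_1^{k-1} + c_2 a_2 b_2^{k-1} \neq 0 \quad (Q_{k-1}\not\equiv 0),
\end{align*}
together with $\det A\neq 0$.

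To satisfy these, normalize $b_1=b_2=1$ and $a_1=1$, so the first condition reads $c_1 + c_2 a_2^2 = 0$ in the case $b_1=b_2$; this only works if $c_1,c_2$ have opposite signs. To handle both sign cases uniformly, I would instead keep $b_2=\beta$ as a free parameter. Set $a_1=1$, $b_1=1$, $a_2=\gamma$, $b_2=\beta$, so the conditions become
\[
c_1 + c_2\gamma^2\beta^{k-2}=0,\qquad c_1+c_2\gamma\beta^{k-1}\neq 0,\qquad \beta\neq\gamma.
\]

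\emph{Opposite signs.} Here $-c_1/c_2>0$. Take $\beta=1$ and $\gamma=\sqrt{-c_1/c_2}$. The first equation holds. The second expression is $c_1+c_2\gamma$; since $c_2\gamma^2 = -c_1$, this equals $c_2\gamma(1-\gamma)$, which is nonzero unless $\gamma=1$. To cover $\gamma=1$, take $\gamma=-\sqrt{-c_1/c_2}$ instead; then $c_2\gamma(1-\gamma)\neq0$ because $\gamma\neq 0,1$. The determinant is $\beta-\gamma = 1-\gamma\neq 0$ as well.

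\emph{Same signs, $k$ odd.} Here $-c_1/c_2<0$. Since $k-2$ is odd, take $\gamma=1$ and choose $\beta$ with $\beta^{k-2} = -c_1/c_2$, which is a negative real root and exists because $k-2$ is odd. Then $\beta<0$, so $\det A = \beta-1\neq 0$. The second expression is $c_1 + c_2\beta^{k-1} = c_1 + c_2\beta\cdot\beta^{k-2} = c_1 - c_1\beta = c_1(1-\beta)\neq 0$.

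The case $k=2$ falls under the opposite-sign case, where $\beta^{k-2}=1$, and the same computation applies, giving $Q_0\equiv 0$. The only delicate point is ensuring the non-vanishing of $Q_{k-1}$ and invertibility simultaneously, which the substitution identities $c_2\gamma^2\beta^{k-2}=-c_1$ reduce to checking $\gamma\ne\beta$ and $\gamma\neq 0$. I expect no serious obstacle beyond the parity requirement, which is exactly where the hypothesis on $k$ is used.
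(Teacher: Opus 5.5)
Your proof is correct and follows essentially the paper's argument: expand $F(AX)$ binomially, impose vanishing of the $X_1^2X_2^{k-2}$ coefficient, and use that relation to check that the $X_1X_2^{k-1}$ coefficient and $\det A$ are nonzero. The paper keeps three entries free and solves $a_{22}^{k-2}=-c_1a_{11}^2a_{12}^{k-2}/(c_2a_{21}^2)$ uniformly, whereas you fix $a_1=b_1=1$ and split by sign. Fixing $a_1=1$ also excludes the degenerate choice $a_{11}=0$, for which the construction breaks down and which the paper's phrase ``for any $a_{11}$'' does not explicitly rule out.
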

    \begin{proof} For $A=(a_{ij})$ where
 $a_{11},a_{12},a_{21},a_{22}$ are real, 
        \begin{align*}
    F(AX) 
    &=\sum_{i=0}^k c'_{k-i}X_1^iX_2^{k-i}, \qquad c'_{k-i} = \binom{k}{i}(c_1 a_{11}^i a_{12}^{k-i}+c_2a_{21}^ia_{22}^{k-i}).
        \end{align*}
        We seek $a_{11},a_{12},a_{21},a_{22}$ such that $ a_{11}a_{22}-a_{12}a_{21}\neq 0$, $c'_{k-1}\neq 0$, and $c'
        _{k-2} =0$.
Note that a priori there is no solution to $c'_{k-2}=0$ if $c_1,c_2$ have the same sign and $k$ is even. So by hypothesis we assume that either $c_1,c_2$ have the same sign and $k$ is odd, or $c_1,c_2$ have different signs. Then the condition $c'_{k-2}=0$ is equivalent to
    \begin{align}\label{eqn:diagonal_a22}
        a_{22}^{k-2}=-c_1 a_{11}^2 a_{12}^{k-2}/c_2a_{21}^2,
    \end{align}
    given $a_{21}\neq 0$. Using this relation and further supposing $a_{12}\neq 0$, we compute that $c_1 a_{11} a_{12}^{k-1}+c_2a_{21} a_{22}^{k-1}=0$ if and only if $c_1a_{11}^k+c_2a_{21}^k=0$, and similarly,
 $a_{11}a_{22}-a_{12}a_{21}=0$ if and only if $c_1a_{11}^k + c_2a_{21}^k= 0$.
    Thus, for any $a_{11}$ and $a_{12}\neq 0$, choose $a_{21}\neq 0$ satisfying $c_1a_{11}^k+c_2a_{21}^k\neq 0$ (so that $\det A\neq 0$ and $c'_{k-1}\neq 0$) and $a_{22}$ satisfying \eqref{eqn:diagonal_a22} (so that $c'_{k-2}=0$).
    \end{proof}

If $k=2$, then the above choices of $A$ ensure that $F(AX)$ does not have nonlinear terms in $X_1$ and so it is of the form $Q_k + X_1Q_{k-1}$ as desired. In the case $k\geq 3$, if in addition we want $Q_{k-3}$ to be identically zero, then there is the additional constraint that $ c_1 a_{11}^3 a_{12}^{k-3}+c_2a_{21}^3 a_{22}^{k-3}=0$.
Combined with \eqref{eqn:diagonal_a22}, we get $c_1a_{11}^k+c_2a_{21}^k=0$ which contradicts $c'_{k-1}\neq 0$.

\subsection{Proof of Corollary \ref{cor:binary}}
Write $F = \sum_{i=0}^k c_i X_1^i X_2^{k-i}$, and let $A=(a_{ij})$ where
 $a_{11},a_{12},a_{21},a_{22}$ are real. Then
    \begin{align*}
        F(AX)
        = \sum_{i=0}^k c'_{k-i} X_1^i X_2^{k-i}, \qquad 
        c'_{k-i} = 
        \sum_{j=0}^{i}\sum_{m=i-j}^{k-j}  c_{m} \binom{m}{i-j} \binom{k-m}{j} a_{11}^{i-j} a_{12}^{m-(i-j)}a_{21}^j a_{22}^{k-m-j}.
    \end{align*}
For ease of computation, let $a_{11}=1, a_{21}=0, a_{22}=1$. Then $\det A =1$ and
    \begin{align*}
    c'_{k-1}=\sum_{m=1}^k c_m m a_{12}^{m-1}, \qquad c'_{k-2} = \sum_{m=2}^k c_m \binom{m}{2}a_{12}^{m-2}
    \end{align*}
are single-variable polynomials in $a_{12}$.
We seek $\tilde{a}_{21}$ such that $c'_{k-1}(\tilde{a}_{21})\neq 0$ and $c'_{k-2}(\tilde{a}_{21})=0$. 
Let $R$ denote the resultant of $c'_{k-2}, c'_{k-1}$. By definition, $R$ is a polynomial in the coefficients of $c'_{k-2}, c'_{k-1}$ such that $R$ vanishes if and only if $c'_{k-2}, c'_{k-1}$ have a common root. First, we check that $R$ is not the zero polynomial. Consider the case where $c_1=1, c_k=1$ and all other $c_i=0$. Then $c'_{k-1} = ka_{12}^{k-1} +1$ and $c'_{k-2} = \binom{k}{2}a_{12}^{k-2}$, so they do not share a common root, and hence $R$ is not identically zero. Let $F$ correspond to a point lying in the intersection of $R\neq 0$ and $c_k\neq 0$ in the moduli space of binary forms of degree $k$. Then, since $c_k\neq 0$, $c'_{k-2}$ is a polynomial in $a_{12}$ of odd degree and hence has a real root $\Tilde{a}_{12}$. On the other hand, since $R\neq 0$, $c'_{k-1}$ does not share a root with $c'_{k-2}$ and hence $c'_{k-1}(\Tilde{a}_{12})\neq 0$. Thus for binary forms of odd degree $k$ that lie in the Zariski-open set defined by the intersection of $R\neq 0$ and $c_k\neq 0$, there exists a nonsingular, linear change of variables $A$ such that $F(AX) = Q_k+X_1Q_{k-1}+X_1^2Q_{k-2}+\cdots + X_1^kQ_0$ with $Q_{k-1}\neq 0$ and $Q_{k-2}=0$.

\section*{Acknowledgements}
 The author thanks Lillian B. Pierce for her continued encouragement and for many helpful discussions. 
The author was partially supported by NSF DMS-2200470 and the Katherine Goodman Stern Fellowship from The Graduate School at Duke University for portions of this project.

\bibliographystyle{alpha}
\bibliography{_bibliography}

\end{document}

%% file: _format.tex
\numberwithin{equation}{section}

\newtheorem{thm}{Theorem}[section]
\newtheorem*{thm*}{Theorem}
\newtheorem{prop}[thm]{Proposition}
\newtheorem{lemma}[thm]{Lemma}
\newtheorem*{lemma*}{Lemma}

\newtheorem{cor}[thm]{Corollary}

\theoremstyle{definition}

\theoremstyle{remark}

\definecolor{pink}{rgb}{1,.2,.6}
\definecolor{orange}{rgb}{0.7,0.3,0}
\definecolor{blue}{rgb}{.2,.6,.75}
\definecolor{green}{rgb}{.4,.7,.4}
\definecolor{purple}{RGB}{127,0,255}

\newcommand{\bQ}{\mathbb{Q}}
\newcommand{\bR}{\mathbb{R}}

\newcommand{\bZ}{\mathbb{Z}}

\newcommand{\GL}{\mathrm{GL}}

\newcommand{\al}{\alpha}

\newcommand{\lam}{\lambda}

\newcommand{\Pcal}{\mathcal{P}}

\newcommand{\beq}{\begin{equation}}
\newcommand{\eeq}{\end{equation}}

\newcommand{\ba}{\begin{align}}
\newcommand{\ea}{\end{align}}

\makeatletter
\def\@tocline#1#2#3#4#5#6#7{\relax
  \ifnum #1>\c@tocdepth 
  \else
    \par \addpenalty\@secpenalty\addvspace{#2}%
    \begingroup \hyphenpenalty\@M
    \@ifempty{#4}{%
      \@tempdima\csname r@tocindent\number#1\endcsname\relax
    }{%
      \@tempdima#4\relax
    }%
    \parindent\z@ \leftskip#3\relax \advance\leftskip\@tempdima\relax
    \rightskip\@pnumwidth plus4em \parfillskip-\@pnumwidth
    #5\leavevmode\hskip-\@tempdima
      \ifcase #1
       \or\or \hskip 1em \or \hskip 2em \else \hskip 3em \fi%
      #6\nobreak\relax
    \hfill\hbox to\@pnumwidth{\@tocpagenum{#7}}\par
    \nobreak
    \endgroup
  \fi}
\makeatother

\newcommand{\genlegendre}[4]{%
  \genfrac{(}{)}{}{#1}{#3}{#4}%
  \if\relax\detokenize{#2}\relax\else_{\!#2}\fi
}

\makeatletter
\let\@@pmod\pmod
\DeclareRobustCommand{\pmod}{\@ifstar\@pmods\@@pmod}
\def\@pmods#1{\mkern4mu({\operator@font mod}\mkern 6mu#1)}
\makeatother